\theoremstyle{plain}
\newtheorem{thm}{Theorem}[section]
\newtheorem*{thm*}{Theorem}
\newtheorem*{cor*}{Corollary}
\newtheorem*{prop*}{Proposition}
\newtheorem*{mthm}{Main Theorem}
\newtheorem{prop}[thm]{Proposition}
\newtheorem{lem}[thm]{Lemma}
\newtheorem{cor}[thm]{Corollary}
\newtheorem*{claim*}{Claim}
\theoremstyle{definition}
\newtheorem{defn}[thm]{Definition}
\newtheorem{rem}[thm]{Remark}
\newtheorem*{conj*}{Conjecture}
\newtheorem*{mpf}{Proof of Main Theorem}
\theoremstyle{remark}
\numberwithin{equation}{thm}
\def\Hom{\operatorname{Hom}}
\def\Mod{\operatorname{Mod}}
\def\p{\mathfrak p}
\def\q{\mathfrak q}
\def\Z{\Bbb Z}
\def\supp{\operatorname{supp}}
\def\Ass{\operatorname{Ass}}
\def\Spec{\operatorname{Spec}}
\def\D{{\mathcal D}}
\def\X{{\mathcal X}}
\def\M{{\mathcal M}}
\def\AA{\left\{
\begin{matrix}
\text{localizing}\\
\text{subcategories}\\
\text{of }\D(A)
\end{matrix}
\right\}}
\def\AB{\left\{
\begin{matrix}
\text{subsets}\\
\text{of }\Spec A
\end{matrix}
\right\}}
\def\AC{\left\{
\begin{matrix}
\text{E-stable subcategories}\\
\text{of }\Mod A\text{ closed}\\
\text{under direct sums}\\
\text{and summands}
\end{matrix}
\right\}}
\def\BA{\left\{
\begin{matrix}
\text{localizing}\\
\text{subcategories}\\
\text{of }\D(A)\text{ closed}\\
\text{under homology}
\end{matrix}
\right\}}
\def\BB{\left\{
\begin{matrix}
\text{coherent}\\
\text{subsets}\\
\text{of }\Spec A
\end{matrix}
\right\}}
\def\BC{\left\{
\begin{matrix}
\text{thick subcategories}\\
\text{of }\Mod A\text{ closed}\\
\text{under direct sums}
\end{matrix}
\right\}}
\def\CA{\left\{
\begin{matrix}
\text{smashing}\\
\text{subcategories}\\
\text{of }\D(A)
\end{matrix}
\right\}}
\def\CB{\left\{
\begin{matrix}
\text{subsets}\\
\text{of }\Spec A\\
\text{closed under}\\
\text{specialization}
\end{matrix}
\right\}}
\def\CC{\left\{
\begin{matrix}
\text{localizing}\\
\text{subcategories}\\
\text{of }\Mod(A)
\end{matrix}
\right\}}
\begin{document}

\setlength{\baselineskip}{18pt}

\title{On localizing subcategories of derived categories}
\author{Ryo Takahashi}
\address{Department of Mathematical Sciences, Faculty of Science, Shinshu University, 3-1-1 Asahi, Matsumoto, Nagano 390-8621, Japan}
\email{takahasi@math.shinshu-u.ac.jp}
\keywords{derived category, localizing subcategory, smashing subcategory, thick subcategory, subset closed under specialization, coherent subset, support}
\subjclass[2000]{13C05, 16D90, 18E30}
\begin{abstract}
Let $A$ be a commutative noetherian ring.
In this paper, we interpret localizing subcategories of the derived category of $A$ by using subsets of $\Spec A$ and subcategories of the category of $A$-modules.
We unify theorems of Gabriel, Neeman and Krause.
\end{abstract}
\maketitle
\section{Introduction}

Let $A$ be a commutative noetherian ring.
In this paper, we investigate the relationship among subcategories of the derived category $\D(A)$ of $A$, subcategories of the category $\Mod A$ of $A$-modules, and subsets of the prime spectrum $\Spec A$ of $A$ (i.e. $\Spec A$ is the set of prime ideals of $A$).

In the early 1960s, Gabriel \cite{Gabriel} showed the following.

\begin{thm}[Gabriel]
There is an inclusion-preserving bijection between the set of localizing subcategories of $\Mod A$ and the set of subsets of $\Spec A$ closed under specialization.
\end{thm}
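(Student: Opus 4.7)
The plan is to exhibit mutually inverse, inclusion-preserving maps. To each localizing subcategory $\X\subseteq\Mod A$ I would associate
\[
f(\X)=\{\p\in\Spec A\mid A/\p\in\X\},
\]
and to each specialization-closed subset $V\subseteq\Spec A$ I would associate
\[
g(V)=\{M\in\Mod A\mid \Supp M\subseteq V\}.
\]
That $f(\X)$ is specialization-closed follows from $\X$ being closed under quotients (if $\p\subseteq\q$ then $A/\q$ is a quotient of $A/\p$), and that $g(V)$ is a localizing subcategory follows from standard compatibilities of $\Supp$ with subobjects, quotients, extensions, and arbitrary direct sums. Monotonicity of both assignments is immediate.

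The easier identity is $f(g(V))=V$. Since $V$ is specialization-closed, $\Supp(A/\p)=V(\p)\subseteq V$ for $\p\in V$, so $A/\p\in g(V)$ and $\p\in f(g(V))$; conversely, $\p\in f(g(V))$ means $A/\p\in g(V)$, i.e.\ $V(\p)\subseteq V$, and in particular $\p\in V$.

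The heart of the argument is $g(f(\X))=\X$. For $\X\subseteq g(f(\X))$, given $M\in\X$ and $\p\in\Supp M$, I would pick $x\in M$ with $\Ann x\subseteq\p$ and a minimal prime $\q$ of $V(\Ann x)$ contained in $\p$; a prime filtration of the finitely generated cyclic module $Ax\cong A/\Ann x$ exhibits $A/\q$ as a subquotient, so $A/\q\in\X$ (Serre subcategories are closed under subquotients), and then $A/\p\in\X$ as a quotient of $A/\q$. For the reverse inclusion, given $M$ with $\Supp M\subseteq f(\X)$, I would write $M$ as the filtered colimit of its finitely generated submodules $N_\lambda$; each $N_\lambda$ admits a prime filtration whose subquotients are $A/\q_i$ with $\q_i\in\Supp N_\lambda\subseteq f(\X)$, so $A/\q_i\in\X$ and $N_\lambda\in\X$ by closure under extensions. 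Realizing $M$ as the cokernel of a map between two coproducts of the $N_\lambda$ and invoking closure of $\X$ under coproducts and quotients then places $M$ in $\X$.

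The main obstacle is this last step, which is where the \emph{localizing} hypothesis (rather than merely Serre) is essential: one must handle non-finitely-generated modules via the colimit presentation, using both coproducts to form the presenting diagram and quotients to extract the colimit as a cokernel. Everything else amounts to careful bookkeeping with supports and prime filtrations.
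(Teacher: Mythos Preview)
Your argument is correct and essentially the classical, self-contained proof of Gabriel's theorem: you work entirely inside $\Mod A$, using prime filtrations of finitely generated modules to reduce to the cyclic modules $A/\p$, and you handle arbitrary modules by the colimit presentation as a cokernel between coproducts of finitely generated submodules. The bookkeeping with $\Supp$, associated primes, and the Serre/coproduct closure properties is all in order.

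The paper, however, does not argue this way. Gabriel's theorem is quoted in the introduction and then recovered as part of the Main Theorem, namely the bottom row of the final diagram. There the bijection between localizing subcategories of $\Mod A$ and specialization-closed subsets of $\Spec A$ is obtained by passing through the derived category: one uses Neeman's classification of smashing subcategories of $\D(A)$ (Theorem~\ref{neeman}(2)), Krause's result that localizing subcategories of $\Mod A$ correspond to specialization-closed subsets via small support, and Proposition~\ref{aboutmp}(2) identifying $\supp^{-1}(\supp\M)$ with $\overline\M$. The maps are $\M\mapsto\supp\M$ and $\Phi\mapsto(\supp^{-1}\Phi)_0$, built from the derived small support rather than your $f(\X)=\{\p\mid A/\p\in\X\}$ and $g(V)=\{M\mid\Supp M\subseteq V\}$.

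What each approach buys: yours is elementary and needs nothing beyond standard commutative algebra (prime filtrations, associated primes, the cokernel description of a filtered colimit); the paper's route is less direct for this single statement but places Gabriel's theorem inside a uniform framework that simultaneously treats localizing and smashing subcategories of $\D(A)$, thick and localizing subcategories of $\Mod A$, and the corresponding classes of subsets of $\Spec A$, all via the single invariant $\supp$.
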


Thirty years later, Neeman \cite{Neeman} proved the following result, which generalizes a theorem of Hopkins \cite{Hopkins}.

\begin{thm}[Neeman]
The assignment $\X\mapsto\supp\X$ makes an inclusion-preserving bijection from the set of localizing subcategories of $\D(A)$ to the set of subsets of $\Spec A$, which induces an inclusion-preserving bijection from the set of smashing subcategories of $\D(A)$ to the set of subsets of $\Spec A$ closed under specialization.
The inverse map sends a subset $\Phi$ of $\Spec A$ to the localizing subcategory of $\D(A)$ generated by $\{ k(\p)\}_{\p\in\Phi}$.
\end{thm}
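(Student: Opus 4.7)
The plan is to exhibit the claimed inverse map explicitly and verify both composites are identities. For a subset $\Phi\subseteq\Spec A$, let $\L_\Phi$ denote the localizing subcategory of $\D(A)$ generated by $\{k(\p)\}_{\p\in\Phi}$, and for a localizing subcategory $\X\subseteq\D(A)$ set $\Phi_\X:=\supp\X=\bigcup_{X\in\X}\supp X$. The goal is to show $\supp\L_\Phi=\Phi$ and $\L_{\Phi_\X}=\X$, and then to identify which subcategories are smashing in terms of their supports.

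The equality $\supp\L_\Phi=\Phi$ follows from $\supp k(\p)=\{\p\}$ together with the observation that $\{X\in\D(A):\supp X\subseteq\Phi\}$ is itself a localizing subcategory containing every $k(\p)$ with $\p\in\Phi$. The real content lies in the reverse identity $\L_{\Phi_\X}=\X$. For the nontrivial inclusion, given $\p\in\supp\X$ one must produce $k(\p)\in\X$. The key tool is the composite $\Gamma_\p(-):=\mathbf{R}\Gamma_{\p A_\p}((-)_\p)$ of localization at $\p$ with local cohomology supported at the maximal ideal of $A_\p$, realized as tensoring $X$ with a direct limit of Koszul complexes on generators of $\p$. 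Since this functor is assembled from shifts, cones, and coproducts of tensors with finitely generated free modules, it preserves $\X$; applied to some $X\in\X$ with $\p\in\supp X$ it produces a nonzero object of $\X$ whose cohomologies are $A_\p$-modules annihilated by powers of $\p A_\p$. Any nonzero such module has $k(\p)$ as a subquotient, and a filtration argument over the local ring $A_\p$ extracts $k(\p)$ inside $\X$. The inclusion $\X\subseteq\L_{\Phi_\X}$ is then obtained by showing that each $X\in\D(A)$ can be assembled from its local pieces $\Gamma_\p X$ by a Mayer--Vietoris style coresolution indexed by $\supp X$.

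For the smashing part, if $\Phi$ is closed under specialization then $\L_\Phi=\{X:\supp X\subseteq\Phi\}$ is the kernel of a Bousfield localization $L_\Phi$ obtained as tensor product with a flat complex built from Koszul-type constructions on ideals defining $\Phi$; because this localization is realized by tensoring with a flat object, it preserves coproducts, so $\L_\Phi$ is smashing. Conversely, if $\X$ is smashing one argues that $\supp\X$ must be a union of closed subsets of $\Spec A$ by exploiting that $\X$ is a tensor-ideal with respect to compact objects, whose supports are closed in $\Spec A$; this forces $\supp\X$ to be specialization-closed.

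The main obstacle is the generation statement that $\p\in\supp\X$ forces $k(\p)\in\X$. Once this key lemma is secured, the bijection and the smashing correspondence follow from a formal combination of the support computations with standard properties of local cohomology and Bousfield localization.
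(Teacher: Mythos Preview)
The paper does not give an independent proof of this statement: it is quoted in the introduction as Neeman's theorem, and the internal restatement (Theorem~\ref{neeman}) establishes only that $\Phi\mapsto\supp^{-1}\Phi$ is a right inverse to $\supp$ (using $\supp E(A/\p)=\{\p\}$), then cites \cite[Theorems~2.8 and~3.3]{Neeman} for bijectivity and for the smashing part; the description of the inverse via the localizing subcategory generated by the $k(\p)$ is recorded afterwards in Corollary~\ref{gene}(1). So your proposal is not a variant of the paper's proof but a self-contained reconstruction of Neeman's original argument, which the paper deliberately treats as a black box.

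Your outline is essentially Neeman's, and the strategy is sound, but one step is phrased in a way that would not work as written. You say that once $\Gamma_\p X\in\X$ has $\p A_\p$-torsion cohomology, ``a filtration argument over the local ring $A_\p$ extracts $k(\p)$ inside $\X$''. Beware: subquotients of cohomology modules of an object of $\X$ need \emph{not} lie in $\X$; localizing subcategories are not closed under passing to submodules or quotients of homology. (For instance, over $k[x]/(x^2)$ the Koszul complex on $x$ has $k$ as every cohomology group, yet the thick subcategory it generates consists of perfect complexes and does not contain $k$.) The correct extraction is the one the paper itself invokes later in Proposition~\ref{aboutmp}(1), quoting \cite[(2.1.7)]{Neeman}: for any $A$-module $M$ the functor $M\otimes_A^{\bf L}(-)$ preserves every localizing subcategory, because a free resolution $F\to M$ exhibits $M\otimes_A^{\bf L}X=\text{Tot}^{\oplus}(F\otimes_A X)$ as a homotopy colimit of iterated cones of copies of $X$. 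Applying this with $M=k(\p)$ gives $k(\p)\otimes_A^{\bf L}X\in\X$, and this object is literally a complex of $k(\p)$-vector spaces, hence a nonzero direct sum of shifts of $k(\p)$; now $k(\p)$ drops out as a summand. No local cohomology or module-theoretic filtration is needed at this point. Your Mayer--Vietoris reconstruction for the inclusion $\X\subseteq\L_{\Phi_\X}$ and the smashing converse are also correct in outline but are exactly the substantial parts of \cite{Neeman}; they require real work beyond what your sketch indicates.
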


Here, $\supp\X$ denotes the set of prime ideals $\p$ of $A$ such that $\p\in\supp X$ for some $X\in\X$, where $\supp X$ denotes the set of prime ideals $\p$ such that $k(\p)\otimes_A^{\bf L}X\ne 0$ in $\D(A)$ ($k(\p)$ denotes the residue field $A_\p/\p A_\p$).

Recently, Krause \cite{Krause} generalized the above Gabriel's result, and corrected a theorem of Hovey \cite{Hovey}.

\begin{thm}[Krause]
The assignment $\M\mapsto\supp\M$ makes an inclusion-preserving bijection from the set of thick subcategories of $\Mod A$ closed under direct sums and the set of coherent subsets of $\Spec A$.
The inverse map is given by $\Phi\mapsto(\supp^{-1}\Phi)_0$.
\end{thm}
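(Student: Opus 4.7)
The plan is to establish three things: that $\M \mapsto \supp\M$ takes values in coherent subsets, that $\Phi \mapsto (\supp^{-1}\Phi)_0$ takes values in thick subcategories closed under direct sums, and that the two assignments are mutually inverse. Inclusion-preservation will then follow directly from the definitions.

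For well-definedness of the forward map, I would write
\[
\supp \M = \bigcup_{M \in \M}\,\bigcup_{x \in M} V(\Ann_A x),
\]
exhibiting $\supp \M$ as a union of Zariski-closed subsets of the required form, which is precisely the defining shape of a coherent subset. For well-definedness of the inverse, closure of $(\supp^{-1}\Phi)_0$ under extensions, kernels, cokernels, summands, and direct sums should follow from the standard identities $\supp(M \oplus N) = \supp M \cup \supp N$ and $\supp M = \supp L \cup \supp N$ for an exact sequence $0 \to L \to M \to N \to 0$, together with the fact that subquotients and summands only shrink the support inside $\Phi$.

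The main obstacle will be the reconstruction inclusion $(\supp^{-1}(\supp \M))_0 \subseteq \M$. My approach is to lift the problem to the derived category and exploit Neeman's theorem recalled above. Let $\T(\M)$ denote the smallest localizing subcategory of $\D(A)$ containing $\M$, with modules placed in degree zero. Then Neeman's theorem should give $\supp \T(\M) = \supp \M$ and characterize $\T(\M)$ as the unique localizing subcategory of $\D(A)$ with this support. Consequently, any $N \in \Mod A$ with $\supp N \subseteq \supp \M$ lies in $\T(\M)$. The hard part is then to descend this containment from $\D(A)$ back to $\Mod A$ and realize $N$ concretely through the abelian operations available for $\M$ (extensions, kernels, cokernels, summands, and direct sums); this is where the subscript $0$ on $(\supp^{-1}\Phi)_0$ encodes the coherence restriction that controls the reconstruction and allows the colimit presentation of $N$ to collapse back to an object of $\M$.

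Finally, for the reverse composite $\supp\bigl((\supp^{-1}\Phi)_0\bigr) = \Phi$, I would produce, for each $\p \in \Phi$, a witness module in $(\supp^{-1}\Phi)_0$ whose support contains $\p$ --- for example a cyclic module of the form $A/I$ where $I$ cuts out a Zariski-closed subset of $\Phi$ through $\p$ --- and use the coherent presentation of $\Phi$ as a union of such closed sets to guarantee that every prime of $\Phi$ is so witnessed.
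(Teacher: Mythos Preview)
Your proposal rests on two misidentifications that make the argument break down. First, the paper uses the \emph{small} support $\supp M=\{\p:k(\p)\otimes_A^{\bf L}M\ne 0\}$, not the big support; your formula $\bigcup_{x\in M}V(\Ann_A x)$ computes $\Supp M$, and the two differ in general (e.g.\ $\supp_{\Z}\mathbb{Q}=\{(0)\}$ while $\Supp_{\Z}\mathbb{Q}=\Spec\Z$ is not what you wrote either, but the point stands for modules like $\mathbb{Q}/\Z$). Second, a \emph{coherent} subset here is not ``a union of Zariski-closed sets of the form $V(\Ann x)$''; it is defined via extendability of maps between injectives with associated primes in $\Phi$ (see the definition before Theorem~\ref{mt}). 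Coherent subsets need not be specialization-closed, so your witness $A/I$ for $\p\in\Phi$ fails: $\supp(A/I)=V(I)$ is specialization-closed and will generally not lie inside $\Phi$. The correct witness, used in the paper, is $E(A/\p)$, whose small support is the singleton $\{\p\}$.

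The second structural problem is your argument that $(\supp^{-1}\Phi)_0$ is thick. You claim ``subquotients only shrink the support'' and ``$\supp M=\supp L\cup\supp N$ for a short exact sequence''; both are false for small support (a submodule can have strictly larger small support). What one actually gets from an exact triangle are only the three inclusions of Lemma~\ref{supp}(1), and these do \emph{not} suffice to show closure under kernels and cokernels for an arbitrary $\Phi$: if $f\colon M\to N$ has $\supp M,\supp N\subseteq\Phi$, one only obtains $\supp\operatorname{cone}(f)\subseteq\Phi$, and passing from this to $\supp H^i(\operatorname{cone}(f))\subseteq\Phi$ is exactly where coherence of $\Phi$ is needed (this is Krause's Theorem~5.2, invoked in the proofs of Theorem~\ref{mt} and Proposition~\ref{aboutmp}(2)). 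Your proof sketch never uses the coherence hypothesis, so it would, if valid, prove the same statement for all $\Phi$, which is false. The paper's route is to go through minimal (K-)injective resolutions and the identity $\supp M=\bigcup_i\Ass E^i(M)$ (Krause, Lemma~3.3), which is what makes both directions work.
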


Here, $\supp^{-1}\Phi$ denotes the full subcategory of $\D(A)$ consisting of all complexes $X$ such that $\supp X$ is contained in $\Phi$, and for a subcategory $\X$ of $\D(A)$, $\X_0$ denotes the full subcategory of $\Mod A$ consisting of all modules whose corresponding complexes are in $\X$.

Let $E(M)=(0\to E^0(M)\to E^1(M)\to E^2(M)\to\cdots)$ denote the minimal injective resolution of an $A$-module $M$.
We say that a full subcategory $\M$ of $\Mod A$ is E-stable provided that a module $M$ is in $\M$ if and only if so is $E^i(M)$ for all $i\ge 0$.
We denote by $\widetilde\M$ the localizing subcategory of $\D(A)$ generated by $\M$, and by $\overline\M$ the localizing subcategory of $\D(A)$ consisting of all complexes each of whose homology modules is in $\M$.
A subcategory $\X$ of $\D(A)$ is said to be closed under homology if (the corresponding complex of) any homology module of any complex in $\X$ is also in $\X$.
Our main result is the following, which contains all of the above three theorems.

\begin{mthm}
One has the following commutative diagram of inclusion-preserving bijections.
\[\xymatrix{
{\AA} \ar@<2mm>[rr]^{(-)_0} \ar@{}[rr]|\cong \ar@<2mm>[dr]^{\supp} & & {\AC} \ar@<2mm>[ll]^{\widetilde{(-)}} \ar@{}[dl]|\cong  \ar@<-2mm>[dl]_\supp \\
& {\AB} \ar@{}[u] \ar@<2mm>[lu]^{\supp^{-1}} \ar@{}[lu]|\cong \ar@<-2mm>[ru]_{(\supp^{-1}(-))_0} &
}\]

Moreover, restricting this diagram, one has the following two commutative diagrams of inclusion-preserving bijections.
\[\xymatrix{
{\BA} \ar@<2mm>[rr]^{(-)_0} \ar@{}[rr]|\cong \ar@<2mm>[dr]^{\supp} & &  {\BC} \ar@<2mm>[ll]^{\overline{(-)}} \ar@{}[dl]|\cong  \ar@<-2mm>[dl]_\supp \\
& {\BB} \ar@{}[u] \ar@<2mm>[lu]^{\supp^{-1}} \ar@{}[lu]|\cong \ar@<-2mm>[ru]_{(\supp^{-1}(-))_0} &
}\]
\[\xymatrix{
{\CA} \ar@<2mm>[rr]^{(-)_0} \ar@{}[rr]|\cong \ar@<2mm>[dr]^{\supp} & & {\CC} \ar@<2mm>[ll]^{\overline{(-)}} \ar@{}[dl]|\cong  \ar@<-2mm>[dl]_\supp \\
& {\CB} \ar@{}[u] \ar@<2mm>[lu]^{\supp^{-1}} \ar@{}[lu]|\cong \ar@<-2mm>[ru]_{(\supp^{-1}(-))_0} &
}\]

Thus, one obtains the following commutative diagram.
$$
\begin{CD}
\AA @. \cong @. \AB @. \cong @. \AC \\
\cup @.  @. \cup @. @. \cup \\
\BA @. \cong @. \BB @. \cong @. \BC \\
\cup @. @. \cup @. @. \cup \\
\CA @. \cong @. \CB @. \cong @. \CC
\end{CD}
$$
\end{mthm}

There are some related works other than ones cited above; see \cite{Balmer, BIK, GP, GP2, wide, Thomason} for example.
In the next section, we will prove this Main Theorem after stating precise definitions and showing preliminary results.


\section{Proof of Main Theorem}

Throughout this section, let $A$ be a commutative noetherian ring.
By a {\em subcategory}, we always mean a full subcategory which is closed under isomorphisms.
We denote the category of $A$-modules by $\Mod A$ and the derived category of $\Mod A$ by $\D(A)$.
For an $A$-module $M$, let
$$
C_M=(\cdots \to 0 \to M \to 0 \to \cdots)
$$
be the complex with $M$ in degree zero.
We will often identify $M$ with $C_M$.

First of all, we recall the definitions of a triangulated subcategory and a localizing subcategory of $\D(A)$.
For a (cochain) $A$-complex $X$ and an integer $n$, we denote by $X[n]$ the complex $X$ shifted by $n$ degrees; its module in degree $i$ is $X^{n+i}$ for each integer $i$.

\begin{defn}
Let $\X$ be a subcategory of $\D(A)$.
\begin{enumerate}[(1)]
\item
We say that $\X$ is {\em triangulated} provided that for every exact triangle $X\to Y\to Z\to X[1]$ in $\D(A)$, if two of $X$, $Y$ and $Z$ are in $\X$, then so is the third.
\item
We say that $\X$ is {\em localizing} if $\X$ is triangulated and closed under (arbitrary) direct sums.
\end{enumerate}
\end{defn}

\begin{rem}
\begin{enumerate}[(1)]
\item
Triangulated subcategories of $\D(A)$ are {\em closed under shifts}: if a complex $X$ is in a triangulated subcategory $\X$ of $\D(A)$, then $X[n]$ is also in $\X$ for every integer $n$.

In fact, it follows from the triangle $X\overset{=}{\to}X\to 0\to X[1]$ that $0$ is in $\X$, and it follows from the triangles $X\to 0\to X[1]\overset{=}{\to} X[1]$ and $X[-1]\to 0\to X\overset{=}{\to} X$ that $X[1],X[-1]$ are in $\X$.
An inductive argument shows that $X[n]$ is in $\X$ for every $n\in\Z$.
\item
Localizing subcategories of $\D(A)$ are closed under direct summands; see \cite[Proposition 1.6.8]{Neemanbook}.
\end{enumerate}
\end{rem}

The support of a complex is defined as follows.

\begin{defn}
The (small) {\em support} $\supp X$ of an $A$-complex $X$ is defined as the set of prime ideals $\p$ of $A$ satisfying $k(\p)\otimes_A^{\bf L}X\ne 0$ in $\D(A)$, where $k(\p)$ denotes the residue field $A_\p/\p A_\p$ of the local ring $A_\p$.
\end{defn}

Here we state basic properties of support.

\begin{lem}\label{supp}
\begin{enumerate}[\rm (1)]
\item
Let $X\to Y\to Z\to X[1]$ be an exact triangle in $\D(A)$.
Then one has the following inclusion relations:
\begin{align*}
\supp X & \subseteq \supp Y\cup\supp Z, \\
\supp Y & \subseteq \supp Z\cup\supp X, \\
\supp Z & \subseteq \supp X\cup\supp Y.
\end{align*}
\item
The equality
$$
\supp\bigg(\bigoplus_{\lambda\in\Lambda}X_\lambda\bigg)=\bigcup_{\lambda\in\Lambda}\supp X_\lambda
$$
holds for any family $\{ X_\lambda\}_{\lambda\in\Lambda}$ of $A$-complexes.
\item
Let $s$ be an integer, and let $X=(\cdots\to X^{s-2}\to X^{s-1}\to X^s\to 0)$ be an $A$-complex.
Then
$$
\supp X\subseteq\bigcup_{i\le s}\supp X^i.
$$
\end{enumerate}
\end{lem}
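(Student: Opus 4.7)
The plan is to prove the three parts in sequence. Parts (1) and (2) are quick formal consequences of properties of derived tensor product, while (3) requires the bounded case together with a homotopy colimit argument for the general unbounded-below complex.

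For (1), apply the triangulated functor $k(\p)\otimes_A^{\bf L}(-)$ to the given triangle to obtain an exact triangle $k(\p)\otimes_A^{\bf L}X\to k(\p)\otimes_A^{\bf L}Y\to k(\p)\otimes_A^{\bf L}Z\to k(\p)\otimes_A^{\bf L}X[1]$ in $\D(A)$. In any triangulated category, if two vertices of a triangle are zero then so is the third, and contrapositives at each $\p$ yield the three claimed inclusions. For (2), derived tensor product commutes with arbitrary coproducts, so $k(\p)\otimes_A^{\bf L}(\bigoplus_\lambda X_\lambda)\cong\bigoplus_\lambda(k(\p)\otimes_A^{\bf L}X_\lambda)$, and since coproducts in $\D(A)$ are computed degreewise this object vanishes iff every summand does.

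For (3), I would first treat the case where $X$ has only finitely many nonzero components by induction. The brutal-truncation short exact sequence of complexes
$$
0\to X^s[-s]\to X\to\sigma^{\le s-1}X\to 0,
$$
in which $\sigma^{\le s-1}X$ agrees with $X$ in degrees $<s$ and is zero in degree $s$ (with the differential out of degree $s-1$ replaced by zero), induces an exact triangle in $\D(A)$. Part (1), combined with $\supp(X^s[-s])=\supp X^s$, gives $\supp X\subseteq\supp X^s\cup\supp\sigma^{\le s-1}X$, and iterating peels off one top degree per step, finishing the bounded case. For general (possibly unbounded below) $X$, realize $X$ as the colimit of its bounded subcomplexes $Y_n=\sigma^{\ge -n}X$. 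Since each inclusion $Y_n\hookrightarrow Y_{n+1}$ is a termwise split monomorphism of subcomplexes, this colimit coincides with the homotopy colimit in $\D(A)$ and fits into an exact triangle $\bigoplus_n Y_n\to\bigoplus_n Y_n\to X\to(\bigoplus_n Y_n)[1]$. Parts (1) and (2) then give $\supp X\subseteq\bigcup_n\supp Y_n$, while the bounded case bounds each $\supp Y_n$ by $\bigcup_{i=-n}^s\supp X^i$, and the desired inclusion follows.

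The main obstacle is the unbounded-below case in (3); a naive iteration of brutal truncations on $X$ itself always leaves a residual term $\supp\sigma^{\le r-1}X$ that cannot be controlled directly, so one must genuinely present $X$ as a homotopy colimit of bounded truncations and combine parts (1) and (2) to dispose of the residual.
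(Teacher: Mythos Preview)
Your arguments for (1) and (2) are essentially identical to the paper's. For (3), your proof is correct but takes a genuinely different route. The paper argues the contrapositive directly: fixing $\p$ with $k(\p)\otimes_A^{\bf L}X^i=0$ for all $i\le s$, it takes a free resolution $F$ of $k(\p)$, notes that each column $F\otimes_A X^i$ of the double complex $F\otimes_A X$ is exact, and invokes the spectral sequence of the double complex (which converges because only finitely many bidegrees contribute to each total degree) to conclude that the total complex, i.e.\ $k(\p)\otimes_A^{\bf L}X$, is exact. Your approach instead peels off one degree at a time via brutal truncations in the bounded case, and then presents the general bounded-above complex as a homotopy colimit of its bounded truncations $\sigma^{\ge -n}X$, feeding the resulting Milnor triangle into parts (1) and (2). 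The paper's argument is a single self-contained computation with no case split; yours avoids spectral sequences entirely and is more categorical, at the cost of needing the two-step reduction and the homotopy-colimit triangle. Incidentally, the ``termwise split monomorphism'' hypothesis you invoke is not actually needed: for any sequential system the map $1-\text{shift}$ on $\bigoplus_n Y_n$ is injective with cokernel the colimit, so the short exact sequence of complexes (hence the triangle) exists automatically.
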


\begin{proof}
(1) Let $\p$ be a prime ideal in $\supp X$.
Then $k(\p)\otimes_A^{\bf L}X$ is nonzero.
There is an exact triangle
$$
k(\p)\otimes_A^{\bf L}X\to k(\p)\otimes_A^{\bf L}Y\to k(\p)\otimes_A^{\bf L}Z \to k(\p)\otimes_A^{\bf L}X[1],
$$
which says that either $k(\p)\otimes_A^{\bf L}Y$ or $k(\p)\otimes_A^{\bf L}Z$ is nonzero.
Thus $\p$ is in the union of $\supp Y$ and $\supp Z$.
The other inclusion relations are similarly obtained.

(2) One has $k(\p)\otimes_A^{\bf L}(\bigoplus_{\lambda\in\Lambda}X_\lambda)\cong\bigoplus_{\lambda\in\Lambda}(k(\p)\otimes_A^{\bf L}X_\lambda)$ for $\p\in\Spec A$.
Hence $k(\p)\otimes_A^{\bf L}(\bigoplus_{\lambda\in\Lambda}X_\lambda)$ is nonzero if and only if $k(\p)\otimes_A^{\bf L}X_\lambda$ is nonzero for some $\lambda\in\Lambda$.

(3) Assume that a prime ideal $\p$ of $A$ satisfies $k(\p)\otimes_A^{\bf L}X^i=0$ for every $i\le s$.
Let $F=(\cdots\to F^{-2}\to F^{-1}\to F^0\to 0)$ be a free resolution of the $A$-module $k(\p)$.
Then the complex $F\otimes_AX^i=(\cdots\to F^{-2}\otimes_AX^i\to F^{-1}\otimes_AX^i\to F^0\otimes_AX^i\to 0)$ is exact for every $i\le s$.
We have a commutative diagram
$$
\begin{CD}
@. \vdots @. \vdots @. \vdots \\
@. @VVV @VVV @VVV \\
(\cdots @>>> F^{-2}\otimes_AX^{s-2} @>>> F^{-2}\otimes_AX^{s-1} @>>> F^{-2}\otimes_AX^s @>>> 0) \\
@. @VVV @VVV @VVV \\
(\cdots @>>> F^{-1}\otimes_AX^{s-2} @>>> F^{-1}\otimes_AX^{s-1} @>>> F^{-1}\otimes_AX^s @>>> 0) \\
@. @VVV @VVV @VVV \\
(\cdots @>>> F^0\otimes_AX^{s-2} @>>> F^0\otimes_AX^{s-1} @>>> F^0\otimes_AX^s @>>> 0) \\
@. @VVV @VVV @VVV \\
@. 0 @. 0 @. 0
\end{CD}
$$
with exact columns.
Considering the spectral sequence of the double complex $F\otimes_AX$, we see that the total complex of $F\otimes_AX$ is exact.
This means that $k(\p)\otimes_A^{\bf L}X=0$.
\end{proof}

We say that a subcategory $\X$ of $\D(A)$ is {\em closed under left complexes} provided that for any $A$-complex $X=(\cdots\to X^{s-2}\to X^{s-1}\to X^s\to 0)$ bounded above, if each $X^i$ is in $\X$, then $X$ is also in $\X$.
For a subset $\Phi$ of $\Spec A$, we denote by $\supp^{-1}\Phi$ the subcategory of $\D(A)$ consisting of all $A$-complexes $X$ with $\supp X\subseteq\Phi$.
The following proposition immediately follows from Lemma \ref{supp}.

\begin{prop}\label{-1}
Let $\Phi$ be a subset of $\Spec A$.
Then $\supp^{-1}\Phi$ is a localizing subcategory of $\D(A)$ closed under left complexes.
\end{prop}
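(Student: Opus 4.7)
The plan is to verify each of the three closure conditions directly from the corresponding parts of Lemma \ref{supp}. Concretely, I need to check that $\supp^{-1}\Phi$ is (a) triangulated, (b) closed under arbitrary direct sums, and (c) closed under left complexes in the sense just defined.

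For (a), I would start with an exact triangle $X\to Y\to Z\to X[1]$ in $\D(A)$ in which two of the three vertices lie in $\supp^{-1}\Phi$, meaning their supports are contained in $\Phi$. By Lemma \ref{supp}(1), the support of the remaining vertex is contained in the union of the supports of the other two, hence in $\Phi$, so that vertex also lies in $\supp^{-1}\Phi$. For (b), given a family $\{X_\lambda\}_{\lambda\in\Lambda}$ of objects of $\supp^{-1}\Phi$, Lemma \ref{supp}(2) gives
\[
\supp\bigg(\bigoplus_{\lambda\in\Lambda}X_\lambda\bigg)=\bigcup_{\lambda\in\Lambda}\supp X_\lambda\subseteq\Phi,
\]
so the direct sum is again in $\supp^{-1}\Phi$.

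For (c), I would take a bounded-above complex $X=(\cdots\to X^{s-2}\to X^{s-1}\to X^s\to 0)$ each of whose terms $X^i$ belongs to $\supp^{-1}\Phi$, so that $\supp X^i\subseteq\Phi$ for every $i\le s$. Lemma \ref{supp}(3) then yields
\[
\supp X\subseteq\bigcup_{i\le s}\supp X^i\subseteq\Phi,
\]
which gives $X\in\supp^{-1}\Phi$.

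There is essentially no obstacle here: the proposition is a formal consequence of the three parts of Lemma \ref{supp}, with each part matched to exactly one of the three closure properties required. The only small thing to note is that the "subcategory" nature (full and closed under isomorphism) is immediate from the global convention stated at the start of the section, and that invariance of $\supp$ under isomorphism in $\D(A)$ ensures $\supp^{-1}\Phi$ is itself isomorphism-closed.
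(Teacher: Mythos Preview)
Your argument is correct and is exactly the intended one: the paper's proof is the single sentence that the proposition ``immediately follows from Lemma \ref{supp},'' and you have simply unpacked this by matching parts (1), (2), (3) of that lemma to the triangulated, direct-sum, and left-complex closure conditions, respectively.
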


We denote the set of associated primes of an $A$-module $M$ by $\Ass M$, and the injective hull of $M$ by $E(M)$.

\begin{lem}\label{ass}
\begin{enumerate}[\rm (1)]
\item
For an $A$-module $M$ we have a direct sum decomposition
$$
E(M)\cong\bigoplus_{\p\in\Ass M}E(A/\p)^{\oplus \Lambda_\p},
$$
where $\Lambda_\p$ is a nonempty set.
\item
The equality
$$
\supp I=\Ass I
$$
holds for every injective $A$-module $I$.
\end{enumerate}
\end{lem}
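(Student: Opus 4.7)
For part (1), the plan is to invoke Matlis's structure theorem: over a noetherian ring, any injective module decomposes as a direct sum of indecomposable injectives of the form $E(A/\p)$, and the primes appearing in the decomposition of $E(M)$ are precisely those in $\Ass E(M)$, which equals $\Ass M$ because $M \hookrightarrow E(M)$ is essential. Part (2) then reduces, via (1) and Lemma~\ref{supp}(2), to showing $\supp E(A/\p) = \{\p\}$ for every $\p \in \Spec A$: indeed, an injective $I$ decomposes as $\bigoplus_{\p \in \Ass I} E(A/\p)^{\oplus\Lambda_\p}$, so $\Ass I$ and $\supp I$ become the unions of $\{\p\}$ and of $\supp E(A/\p)$ over $\p \in \Ass I$, respectively.

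For the inclusion $\supp E(A/\p) \subseteq \{\p\}$, I would fix $\q \ne \p$ and verify $k(\q) \otimes_A^{\bf L} E(A/\p) = 0$ in two cases. If $\p \not\subseteq \q$, pick $s \in \p \setminus \q$: since $E(A/\p) = E_{A_\p}(k(\p))$ is Artinian over $A_\p$ and every element is annihilated by some power of $\p A_\p$, each element is killed by some $s^n$; as $s^n \in A \setminus \q$ this forces $E(A/\p)_\q = 0$, whence flatness of $A_\q$ over $A$ gives $k(\q) \otimes_A^{\bf L} E(A/\p) \cong k(\q) \otimes_{A_\q}^{\bf L} E(A/\p)_\q = 0$. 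If instead $\q \not\subseteq \p$, pick $s \in \q \setminus \p$; then $s \in A \setminus \p$ acts invertibly on the $A_\p$-module $E(A/\p)$, so tensoring the triangle $A \xrightarrow{s} A \to A/(s) \to A[1]$ with $E(A/\p)$ yields $A/(s) \otimes_A^{\bf L} E(A/\p) = 0$. Since $s \in \q$ makes $k(\q)$ an $A/(s)$-module, the change-of-rings identification $k(\q) \otimes_A^{\bf L} E(A/\p) \cong k(\q) \otimes_{A/(s)}^{\bf L}\bigl(A/(s) \otimes_A^{\bf L} E(A/\p)\bigr)$ gives zero.

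The main obstacle is the reverse inclusion $\p \in \supp E(A/\p)$. Writing $R = A_\p$, $k = k(\p)$, and $E = E_R(k)$, the task is to show $k \otimes_R^{\bf L} E \ne 0$. The plan is to combine Matlis duality, which identifies $\End_R(E) \cong \widehat R$ for the $\p A_\p$-adic completion, with the hom--tensor adjunction to obtain
\[
\RHom_R\bigl(k \otimes_R^{\bf L} E,\, E\bigr) \cong \RHom_R\bigl(k,\, \End_R(E)\bigr) \cong \RHom_R(k,\, \widehat R) \cong \RHom_{\widehat R}(k,\, \widehat R),
\]
where the last isomorphism uses flatness of $\widehat R$ over $R$. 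Rees's theorem then supplies $\Ext^{\depth \widehat R}_{\widehat R}(k,\, \widehat R) \ne 0$ (with $\depth \widehat R$ finite because $\widehat R \ne 0$), so the right-hand side is nonzero, forcing $k \otimes_R^{\bf L} E \ne 0$ as required.
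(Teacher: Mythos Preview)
Your argument is correct and follows the same overall plan as the paper: part (1) is Matlis's structure theorem, and part (2) is reduced via (1) and Lemma~\ref{supp}(2) to the single computation $\supp E(A/\p)=\{\p\}$. The paper handles that computation very tersely, recording the two localization isomorphisms
\[
k(\p)\otimes_A^{\bf L}E(A/\q)\;\cong\;k(\p)\otimes_A^{\bf L}\bigl(E(A/\q)_\p\bigr)\quad\text{and}\quad k(\p)\otimes_A^{\bf L}E(A/\q)\;\cong\;\bigl(k(\p)_\q\bigr)\otimes_A^{\bf L}E(A/\q),
\]
and then asserting that the complex is nonzero iff $\p=\q$. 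Your case split ($\p\not\subseteq\q$ via $\p$-power torsion, $\q\not\subseteq\p$ via invertibility of $s$ and change of rings along $A\to A/(s)$) is exactly what is hiding behind those two isomorphisms, just made explicit.

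The one place you genuinely go beyond the paper is the reverse inclusion $\p\in\supp E(A/\p)$: the paper leaves this implicit, whereas you supply a full argument via $\RHom_R(k\otimes_R^{\bf L}E,E)\cong\RHom_R(k,\widehat R)\cong\RHom_{\widehat R}(k,\widehat R)$ and Rees's theorem. This is correct (the adjunction step uses that $E$ is injective so $\RHom_R(E,E)=\End_R(E)\cong\widehat R$, and the base change uses that $\widehat R\otimes_R F_\bullet$ resolves $k$ over $\widehat R$), but it is heavier machinery than strictly necessary. A lighter alternative, in the spirit of the paper's later use of \cite[Lemma~3.3]{Krause}, is to note that for any module $M$ one has $\supp M=\bigcup_{i\ge 0}\Ass E^i(M)$; applying this to the injective module $M=E(A/\p)$, whose minimal injective resolution is itself, gives $\supp E(A/\p)=\Ass E(A/\p)=\{\p\}$ in one stroke.
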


\begin{proof}
(1) This assertion can be shown by using \cite[Theorem 3.2.8]{BH}.

(2) Let $\p,\q$ be prime ideals of $A$.
Then we easily see that there are isomorphisms
$$
k(\p)\otimes_A^{\bf L}(E(A/\q)_\p)\cong k(\p)\otimes_A^{\bf L}E(A/\q)\cong(k(\p)_\q)\otimes_A^{\bf L}E(A/\q).
$$
Therefore the complex $k(\p)\otimes_A^{\bf L}E(A/\q)$ is nonzero if and only if $\p=\q$.
The assertion follows from this fact and (1).
\end{proof}

We now recall the definition of a smashing subcategory.

\begin{defn}
Let $\X$ be a localizing subcategory of $\D(A)$.
\begin{enumerate}[(1)]
\item
An object $C\in\D(A)$ is called {\em $\X$-local} if $\Hom_{\D(A)}(X,C)=0$ for any $X\in\X$.
\item
A morphism $f:C\to L$ is called a {\em localization} of $C$ by $\X$ if $L$ is $\X$-local, and $\Hom_{\D(A)}(f,L'):\Hom_{\D(A)}(L,L')\to\Hom_{\D(A)}(C,L')$ is an isomorphism for any $\X$-local object $L'\in\D(A)$.
\item
$\X$ is called {\em smashing} if localization by $\X$ commutes with direct sums.
\end{enumerate}
\end{defn}

For a subcategory $\X$ of $\D(A)$, we denote by $\supp\X$ the set of prime ideals $\p$ of $A$ such that $\p\in\supp X$ for some $X\in\X$.
We describe a theorem of Neeman \cite{Neeman} in the following form.

\begin{thm}\label{neeman}
\begin{enumerate}[\rm (1)]
\item
One has maps
$$
\AA
\begin{smallmatrix}
f\\
\longrightarrow\\
\longleftarrow\\
g
\end{smallmatrix}
\AB
$$
defined by $f(\X)=\supp\X$ and $g(\Phi)=\supp^{-1}\Phi$.
The map $f$ is an inclusion-preserving bijection and $g$ is its inverse map.
\item
One has maps
$$
\CA
\begin{smallmatrix}
f\\
\longrightarrow\\
\longleftarrow\\
g
\end{smallmatrix}
\CB
$$
defined by $f(\X)=\supp\X$ and $g(\Phi)=\supp^{-1}\Phi$.
The map $f$ is an inclusion-preserving bijection and $g$ is its inverse map.
\end{enumerate}
\end{thm}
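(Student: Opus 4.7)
The plan is to verify that $f$ and $g$ land in the claimed classes, preserve inclusions, and are mutually inverse, first for part (1) and then matching smashing with specialization-closed in part (2). Well-definedness and inclusion-preservation are free: $g(\Phi)=\supp^{-1}\Phi$ is localizing by Proposition \ref{-1}, $f(\X)=\supp\X$ is a subset of $\Spec A$ by construction, and both maps visibly respect inclusion. So the bulk of (1) reduces to the equalities $f\circ g=\mathrm{id}$ and $g\circ f=\mathrm{id}$.

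The equality $f(g(\Phi))=\Phi$ is the easy half. The inclusion $\supp(\supp^{-1}\Phi)\subseteq\Phi$ is immediate from the definitions, and for each $\p\in\Phi$ the computation $\supp C_{k(\p)}=\{\p\}$ (using that $k(\q)\otimes_A^{\bf L}k(\p)$ is nonzero exactly when $\p=\q$) shows $C_{k(\p)}\in\supp^{-1}\Phi$, witnessing $\p$ on the left-hand side.

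The equality $g(f(\X))=\X$ is the essence of Neeman's theorem. The inclusion $\X\subseteq\supp^{-1}(\supp\X)$ is tautological. For the reverse, I would invoke the Bousfield localization with respect to $\X$, which exists because $\D(A)$ is compactly generated: any $X\in\D(A)$ sits in an exact triangle $X_\X\to X\to X^\perp\to X_\X[1]$ with $X_\X\in\X$ and $X^\perp$ an $\X$-local object. If $\supp X\subseteq\supp\X$, then Lemma \ref{supp}(1) forces $\supp X^\perp\subseteq\supp X\cup\supp X_\X\subseteq\supp\X$. The main obstacle is the accompanying key lemma that $C_{k(\p)}\in\X$ for every $\p\in\supp\X$; granting this, the vanishing of $\Hom_{\D(A)}(C_{k(\p)}[n],X^\perp)$ for all $n\in\Z$, combined with a Nakayama-type argument after localizing at $\p$, yields $k(\p)\otimes_A^{\bf L}X^\perp=0$ for each $\p\in\supp\X$, hence $\supp X^\perp=\emptyset$ and $X^\perp=0$. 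The proof that $C_{k(\p)}\in\X$ is the technically delicate step: from any $M\in\X$ with $\p\in\supp M$, one localizes at $\p$ and constructs $C_{k(\p)}$ from $M_\p$ by a Koszul-type argument on a system of parameters, using that $\X$ is closed under direct sums and triangles.

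For part (2), it remains to match smashing with specialization-closed. The forward direction uses the telescope property in the noetherian setting: a smashing subcategory is generated as a localizing subcategory by its compact objects, which are perfect complexes, and the support of a perfect complex is Zariski closed. Hence $\supp\X$ is a union of closed sets, i.e.\ specialization-closed. Conversely, writing $\Phi=\bigcup_i V(\a_i)$ for a specialization-closed $\Phi$, one identifies $\supp^{-1}\Phi$ with the localizing subcategory generated by the perfect Koszul complexes $K(\xx_i;A)$ on generators $\xx_i$ of the $\a_i$; since any localizing subcategory generated by compact objects is smashing, $\supp^{-1}\Phi$ is smashing. Combined with (1), this yields the restricted bijection.
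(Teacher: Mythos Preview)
Your outline is correct, but it is considerably more ambitious than the paper's own proof. The paper treats this theorem as a restatement of results already in Neeman's paper \cite{Neeman}: it verifies that $g$ is well-defined via Proposition~\ref{-1}, checks $f\circ g=\mathrm{id}$ directly (using $\supp E(A/\p)=\{\p\}$ from Lemma~\ref{ass}(2) rather than your $\supp k(\p)=\{\p\}$, though either works), and then simply cites \cite[Theorem~2.8]{Neeman} for the bijectivity of $f$ in (1) and \cite[Theorem~3.3]{Neeman} for (2). By contrast, you sketch the internal mechanism of those cited theorems: the Bousfield localization triangle, the key lemma $k(\p)\in\X$ for $\p\in\supp\X$, and for (2) the telescope-type identification of smashing subcategories with those generated by perfect complexes.

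What each buys: the paper's approach is appropriate here because the theorem is explicitly attributed to Neeman and used as input for the rest of the paper, so reproving it would be redundant. Your approach has the virtue of being self-contained and making clear where the real content lies. One soft spot in your sketch is the passage from $\Hom_{\D(A)}(k(\p)[n],X^\perp)=0$ for all $n$ to $k(\p)\otimes_A^{\bf L}X^\perp=0$; calling this a ``Nakayama-type argument'' is suggestive but underspecified. The cleanest way to close that step is via the minimal K-injective resolution $I$ of $X^\perp$: vanishing of $\RHom(k(\p),X^\perp)$ together with minimality forces no copy of $E(A/\p)$ to occur in any $I^i$, whence $\p\notin\supp X^\perp$ by the characterization of support through the minimal K-injective resolution. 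With that detail filled in, your argument is a faithful reconstruction of Neeman's.
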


\begin{proof}
(1) Proposition \ref{-1} guarantees that $g$ is well-defined.
Let $\Phi$ be a subset of $\Spec A$.
Then the inclusion $\supp(\supp^{-1}\Phi)\subseteq\Phi$ clearly holds.
It follows from Lemma \ref{ass}(2) that $\supp E(A/\p)=\Ass E(A/\p)=\{\p\}\subseteq\Phi$ for every $\p\in\Phi$, which yields the opposite inclusion $\supp(\supp^{-1}\Phi)\supseteq\Phi$.
Therefore we have the equality $\supp(\supp^{-1}\Phi)=\Phi$, which shows that $fg$ is the identity map.
Since $f$ is a bijective map by virtue of \cite[Theorem 2.8]{Neeman}, $g$ is the inverse map of $f$.
It is easy to check that $f$ is inclusion-preserving.

(2) This follows from \cite[Theorem 3.3]{Neeman} and (1).
\end{proof}

Combining Theorem \ref{neeman}(1) with \cite[Theorem 2.8]{Neeman} and Proposition \ref{-1}, we obtain the following result.

\begin{cor}\label{gene}
\begin{enumerate}[\rm (1)]
\item
For every subset $\Phi$ of $\Spec A$, $\supp^{-1}\Phi$ is the localizing subcategory of $\D(A)$ generated by $\{ k(\p)\}_{\p\in\Phi}$.
\item
Any localizing subcategory of $\D(A)$ is closed under left complexes.
\end{enumerate}
\end{cor}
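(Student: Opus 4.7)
The plan is to recognize both statements as immediate consequences of the bijection in Theorem \ref{neeman}(1), combined with Proposition \ref{-1} and Neeman's original formulation in \cite[Theorem 2.8]{Neeman}. For (1), Neeman's theorem provides one explicit formula for the inverse of $f=\supp$, namely $\Phi\mapsto\langle k(\p):\p\in\Phi\rangle_{\mathrm{loc}}$, whereas Theorem \ref{neeman}(1) furnishes the formula $\Phi\mapsto\supp^{-1}\Phi$ for the same inverse map; since the inverse of a bijection is unique, the two expressions must coincide.

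To make the argument self-contained, I would verify the equality directly as follows. Set $\X_\Phi:=\langle k(\p):\p\in\Phi\rangle_{\mathrm{loc}}$. By Lemma \ref{ass}(2) (or a direct calculation) one has $\supp k(\p)=\{\p\}\subseteq\Phi$ for each generator, and Proposition \ref{-1} says $\supp^{-1}\Phi$ is itself a localizing subcategory, so $\X_\Phi\subseteq\supp^{-1}\Phi$. Taking supports gives $\supp\X_\Phi\supseteq\Phi$ (from the generators) and $\supp\X_\Phi\subseteq\supp(\supp^{-1}\Phi)=\Phi$, the last equality having been established in the proof of Theorem \ref{neeman}(1). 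Hence $f(\X_\Phi)=\Phi=f(\supp^{-1}\Phi)$, and injectivity of $f$ forces $\X_\Phi=\supp^{-1}\Phi$.

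Statement (2) is then essentially free: for any localizing subcategory $\X$ of $\D(A)$, Theorem \ref{neeman}(1) yields $\X=g(f(\X))=\supp^{-1}(\supp\X)$, and Proposition \ref{-1} says that this subcategory is closed under left complexes, so $\X$ is too. There is no genuine technical obstacle here; the only point to be careful about is avoiding circular reasoning. Theorem \ref{neeman}(1) itself invoked \cite[Theorem 2.8]{Neeman} to obtain bijectivity of $f$, so our appeal to uniqueness of inverses (or equivalently to injectivity of $f$) is simply extracting the content of Neeman's theorem in a different packaging rather than reproving it.
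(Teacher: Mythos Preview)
Your proposal is correct and follows essentially the same approach as the paper, which simply states that the corollary follows by combining Theorem~\ref{neeman}(1) with \cite[Theorem 2.8]{Neeman} and Proposition~\ref{-1}. You have just made the one-line argument explicit: for (1), the two descriptions of $g$ agree because the inverse of the bijection $f$ is unique, and for (2), every localizing $\X$ equals $\supp^{-1}(\supp\X)$, which is closed under left complexes by Proposition~\ref{-1}.
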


Krause \cite{Krause} introduces the notion of a coherent subset of $\Spec A$:

\begin{defn}
A subset $\Phi$ of $\Spec A$ is called {\em coherent} if every homomorphism $f:I^0\to I^1$ of injective $A$-modules with $\Ass I^i\subseteq\Phi$ for $i=1,2$ can be completed to an exact sequence $I^0\overset{f}{\to}I^1\to I^2$ of injective $A$-modules with $\Ass I^2\subseteq\Phi$.
\end{defn}

To relate coherent subsets of $\Spec A$ to localizing subcategories of $\D(A)$, we make the following definition.

\begin{defn}
Let $\X$ be a subcategory of $\D(A)$.
\begin{enumerate}[(1)]
\item
We say that $\X$ is {\em closed under homology} if $H^i(X)$ is in $\X$ for all $X\in\X$ and $i\in\Z$.
\item
We say that $\X$ is {\em H-stable} provided that a complex $X$ is in $\X$ if and only if so is $H^i(X)$ for every $i\in\Z$.
\end{enumerate}
\end{defn}

We have the following one-to-one correspondence.

\begin{thm}\label{mt}
One has maps
$$
\BA
\begin{smallmatrix}
f\\
\longrightarrow\\
\longleftarrow\\
g
\end{smallmatrix}
\BB
$$
defined by $f(\X)=\supp\X$ and $g(\Phi)=\supp^{-1}\Phi$.
The map $f$ is an inclusion-preserving bijection and $g$ is its inverse map.
\end{thm}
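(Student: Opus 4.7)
The plan is to restrict the bijection of Theorem \ref{neeman}(1). Since that theorem already produces an inclusion-preserving bijection between all localizing subcategories of $\D(A)$ and all subsets of $\Spec A$ (via $\X \mapsto \supp \X$ with inverse $\Phi \mapsto \supp^{-1}\Phi$), the task is to verify that this bijection restricts to one between $\BA$ and $\BB$. This amounts to two implications: $\X$ closed under homology forces $\supp \X$ to be coherent, and $\Phi$ coherent forces $\supp^{-1}\Phi$ to be closed under homology.

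For the forward direction, given $\X$ closed under homology and a morphism $f \colon I^0 \to I^1$ of injectives with $\Ass I^i \subseteq \Phi := \supp \X$, Lemma \ref{ass}(2) yields $\supp I^i = \Ass I^i \subseteq \Phi$, hence $I^i \in \supp^{-1}\Phi = \X$. The two-term complex $(I^0 \to I^1)$ concentrated in degrees $0$ and $1$ is bounded above with entries in $\X$, so lies in $\X$ by Corollary \ref{gene}(2); closure under homology then places $\Coker f = H^1$ in $\X$, so $\Ass \Coker f \subseteq \supp \Coker f \subseteq \Phi$. Taking $I^2 := E(\Coker f)$ and invoking Lemma \ref{ass}(1) supplies the desired extension $I^0 \to I^1 \to I^2$ with $\Ass I^2 \subseteq \Phi$, establishing coherence.

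The reverse direction is harder. Writing $\X := \supp^{-1}\Phi$ and $\M := \X \cap \Mod A = \{M \in \Mod A : \supp M \subseteq \Phi\}$, the strategy is to introduce $\Y := \{X \in \D(A) : H^i(X) \in \M \text{ for all } i \in \Z\}$ and prove that $\Y$ is a localizing subcategory of $\D(A)$. Since $k(\p) \in \Y$ for every $\p \in \Phi$ and $\X$ is generated by $\{k(\p)\}_{\p \in \Phi}$ as a localizing subcategory (Corollary \ref{gene}(1)), this gives $\X \subseteq \Y$, which is precisely the closure of $\X$ under homology. Stability of $\Y$ under shifts and direct sums is immediate. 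Stability under exact triangles follows, via the long exact cohomology sequence, once $\M$ is known to be a Serre subcategory of $\Mod A$: for a triangle $X \to Y \to Z \to X[1]$ with $X, Y \in \Y$, each $H^i(Z)$ is then an extension of a submodule of $H^{i+1}(X) \in \M$ by a quotient of $H^i(Y) \in \M$, and thus remains in $\M$.

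The principal obstacle is therefore to deduce from coherence of $\Phi$ that $\M$ is Serre. Closure of $\M$ under extensions is automatic from Lemma \ref{supp}(1), and closure under quotients will follow from closure under submodules together with the same lemma. For submodules, given $A \subseteq M \in \M$, the plan is to build an injective coresolution $0 \to A \to J^0 \to J^1 \to J^2 \to \cdots$ with each $J^i$ injective and $\Ass J^i \subseteq \Phi$, using the coherence hypothesis repeatedly to extend the coresolution one term at a time from a suitable initial embedding of $A$ into an injective with associated primes in $\Phi$ (for instance, via $A \hookrightarrow M \hookrightarrow E(M)$, where $\Ass E(M) = \Ass M \subseteq \Phi$). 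Once such a coresolution is in hand, each $J^i$ belongs to $\supp^{-1}\Phi$ by Lemma \ref{ass}(2); the bounded-above truncations of $J^\bullet$ lie in $\supp^{-1}\Phi$ by Corollary \ref{gene}(2); and a standard homotopy colimit argument places $J^\bullet$ itself in $\supp^{-1}\Phi$. Since $J^\bullet$ is quasi-isomorphic to $A$ in $\D(A)$, this yields $A \in \supp^{-1}\Phi \cap \Mod A = \M$, completing the Serre argument and with it the proof.
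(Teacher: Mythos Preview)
Your overall strategy---restrict Neeman's bijection from Theorem \ref{neeman}(1) and check that each side lands in the right place---is exactly what the paper does. The difference is that the paper outsources both well-definedness checks to \cite[Theorem 5.2]{Krause}, whereas you attempt them directly. Your forward direction (closed under homology $\Rightarrow$ coherent) is correct and pleasantly self-contained.

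The reverse direction, however, has a real gap at the step where you claim closure of $\M=(\supp^{-1}\Phi)_0$ under submodules. You propose to build an injective coresolution $0\to N\to J^0\to J^1\to\cdots$ with $\Ass J^i\subseteq\Phi$ ``using the coherence hypothesis repeatedly.'' But coherence only lets you extend a map $I^0\to I^1$ \emph{between injectives} with $\Ass I^i\subseteq\Phi$ to an exact sequence $I^0\to I^1\to I^2$. After your initial embedding $N\hookrightarrow J^0=E(M)$, there is no such map to feed into the hypothesis: applying coherence to $0\to J^0$ produces $J^0\to J^1$ with kernel $0$, not with kernel $N$, and $N$ itself is not injective. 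So you cannot manufacture the map $J^0\to J^1$ with kernel $N$ from coherence alone, and the inductive construction never gets off the ground.

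This is not a minor oversight: deducing that $(\supp^{-1}\Phi)_0$ is thick from coherence of $\Phi$ is precisely the substantive content of \cite[Theorem 3.1]{Krause}, and its proof requires a more delicate argument than the one you sketch (one route goes through minimal K-injective resolutions and \cite[Proposition 5.1]{Krause}, as in the paper's proof of Proposition \ref{aboutmp}(2)). Once you have that $\M$ is Serre---by citation or by supplying the missing argument---the rest of your plan (defining $\Y=\overline{\M}$, checking it is localizing via the long exact sequence, and concluding $\X\subseteq\Y$ from Corollary \ref{gene}(1)) is sound.
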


\begin{proof}
Let $\X$ be a localizing subcategory of $\D(A)$ closed under homology.
Then we have $\X=\supp^{-1}(\supp\X)$ by Theorem \ref{neeman}(1).
It is seen from \cite[Theorem 5.2]{Krause} that $\supp\X$ is coherent.
Hence $f$ is well-defined.
From Proposition \ref{-1} and \cite[Theorem 5.2]{Krause} we see that $g$ is well-defined.
Theorem \ref{neeman}(1) shows that $f$ is an inclusion-preserving bijective map and $g$ is the inverse map of $f$.
\end{proof}

By definition, an H-stable subcategory of $\D(A)$ is closed under homology.
The converse of this statement also holds:

\begin{cor}\label{lchst}
\begin{enumerate}[\rm (1)]
\item
Any localizing subcategory of $\D(A)$ closed under homology is H-stable.
\item
Any smashing subcategory of $\D(A)$ is H-stable.
\end{enumerate}
\end{cor}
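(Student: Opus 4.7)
The strategy for both parts is to translate the problem into support theory via Theorem \ref{mt}.

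For part (1), let $\X$ be a localizing subcategory closed under homology and let $X$ satisfy $H^i(X)\in\X$ for every $i\in\Z$. By Theorem \ref{mt} one has $\X=\supp^{-1}(\supp\X)$, so membership is detected by support: $X\in\X$ is equivalent to $\supp X\subseteq\supp\X$, while the hypothesis reads $\supp H^i(X)\subseteq\supp\X$ for every $i$. It therefore suffices to establish the key inclusion
\[
\supp X\subseteq\bigcup_{i\in\Z}\supp H^i(X).
\]
To prove this, take $\p\in\supp X$. The isomorphism $k(\p)\otimes_A^{\bf L}X\cong k(\p)\otimes_{A_\p}^{\bf L}X_\p$ lets me localize at $\p$ and assume $A$ is local with residue field $k=k(\p)$. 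I then fix a bounded-above projective resolution $P\to k$ by free modules; the total complex of the double complex $P\otimes_A X$ represents $k\otimes_A^{\bf L}X$, and the standard spectral sequence has $E_2$-terms of the form $\Tor^A_n(k,H^m(X))$ abutting to the cohomology of $k\otimes_A^{\bf L}X$. Convergence is secured by $P$ being bounded above, which makes each total degree involve only finitely many contributions. Since $\p\in\supp X$ forces the abutment to be non-zero, some $E_2$-term must be non-zero, yielding $m$ with $\p\in\supp H^m(X)$.

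For part (2), let $\X$ be smashing. By Theorem \ref{neeman}(2) one has $\X=\supp^{-1}(\Phi)$ with $\Phi=\supp\X$ closed under specialization. It suffices to verify that $\X$ is closed under homology, after which part (1) delivers H-stability. So take $X\in\X$, i.e. $\supp X\subseteq\Phi$. The ordinary big support $\Supp X$ is the specialization-closure of the small support $\supp X$ (a standard property of small support over a noetherian ring), hence $\Supp X\subseteq\Phi$. Then for each $i$ one has $\supp H^i(X)\subseteq\Supp H^i(X)\subseteq\Supp X\subseteq\Phi$, so $H^i(X)\in\X$, as required.

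The principal obstacle is justifying convergence of the double-complex spectral sequence when $X$ is unbounded; the bounded-above choice of $P$ is precisely what makes the filtration on each total degree manageable and the abutment well-defined. The auxiliary identification of $\Supp X$ with the specialization-closure of $\supp X$ used in part (2) is a standard tool in small-support theory for complexes.
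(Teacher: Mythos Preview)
Your strategy is sound and genuinely different from the paper's: rather than invoking Krause's results on coherent subsets as black boxes, you argue directly with supports. For part~(2) your argument is correct, granting the standard fact that the big support $\Supp X$ equals the specialization closure of $\supp X$; the paper instead observes that specialization-closed subsets are coherent \cite[Proposition 4.1(2)]{Krause} and then feeds this back through Theorem~\ref{mt} and part~(1).

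The gap is in part~(1). Your assertion that ``$P$ being bounded above \dots makes each total degree involve only finitely many contributions'' is false: in total degree $n$ the direct-sum totalization has summands $P^i\otimes_A X^{n-i}$ for every $i\le 0$, and these are infinitely many whenever $X$ is unbounded above. The inclusion $\supp X\subseteq\bigcup_i\supp H^i(X)$ you are aiming for is true, but the convergence justification you give does not stand as written. A clean repair that sidesteps all convergence subtleties: after localizing at $\p$, replace the free resolution of $k(\p)$ by the Koszul complex $K$ on a finite generating set of the maximal ideal. Then $K$ is a \emph{bounded} complex of finite free modules with $\supp K=\{\p\}$; since $\supp(K\otimes_A^{\bf L}X)=\supp K\cap\supp X$, one has $K\otimes_A X=0$ in $\D(A)$ if and only if $\p\notin\supp X$. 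Now the double complex is bounded in the $K$-direction, so the spectral sequence with $E_2^{i,j}=H^i\bigl(K\otimes_A H^j(X)\bigr)$ converges by a finite filtration, and each $E_2$-term vanishes because $\p\notin\supp H^j(X)$ forces $K\otimes_A^{\bf L}H^j(X)=0$.
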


\begin{proof}
(1) Let $\X$ be a localizing subcategory of $\D(A)$ closed under homology.
Then by Theorem \ref{mt} we have $\X=\supp^{-1}\Phi$ for some coherent subset $\Phi$ of $\Spec A$.
It follows from \cite[Theorem 5.2]{Krause} that $\X$ is H-stable.

(2) Let $\X$ be a smashing subcategory of $\D(A)$.
Then $\Phi:=\supp\X$ is closed under specialization by Theorem \ref{neeman}(2).
It follows from \cite[Proposition 4.1(2)]{Krause} that $\Phi$ is coherent.
Theorem \ref{neeman}(1) yields $\X=\supp^{-1}\Phi$, which is H-stable by Theorem \ref{mt} and (1).
\end{proof}

Following \cite{Krause}, we define a thick subcategory of modules as follows.

\begin{defn}
A subcategory $\M$ of $\Mod A$ is called {\em thick} provided that for any exact sequence
$$
M_1\to M_2\to M_3\to M_4\to M_5
$$
of $A$-modules, if $M_i$ is in $\M$ for $i=1,2,4,5$, then so is $M_3$.
\end{defn}

\begin{rem}
\begin{enumerate}[(1)]
\item
A subcategory of $\Mod A$ is thick if and only if it is closed under kernels, cokernels and extensions.
\item
If a subcategory of $\Mod A$ is closed under kernels or cokernels, then it is closed under direct summands.
In particular, every thick subcategory of $\Mod A$ is closed under direct summands, and contains the zero module $0$.

Indeed, assume that the direct sum $M=N\oplus L$ of two $A$-modules $N,L$ is in a subcategory $\M$ of $\Mod A$.
Then the exact sequence
$$
0\to N\to M\overset{\left(\begin{smallmatrix}
0 & 0 \\
0 & 1
\end{smallmatrix}\right)}{\longrightarrow} M\to N\to 0
$$
of $A$-modules shows that $N$ is in $\M$ if $\M$ is closed under kernels or cokernels.
\end{enumerate}
\end{rem}

For an $A$-module $M$, let
$$
E(M)=(0\to E^0(M)\to E^1(M)\to E^2(M)\to\cdots)
$$
denote the minimal injective resolution of $M$.
(Recall that a minimal injective resolution of a given $A$-module is uniquely determined up to isomorphism; see \cite[Page 99]{BH}.)

\begin{defn}
We say that a subcategory $\M$ of $\Mod A$ is {\em E-stable} provided that a module $M$ is in $\M$ if and only if so is $E^i(M)$ for every $i\ge 0$.
\end{defn}

\begin{prop}\label{thest}
Every thick subcategory of $\Mod A$ closed under direct sums is E-stable.
\end{prop}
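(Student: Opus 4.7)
The plan is to pass to the derived category and invoke the correspondence of Theorem \ref{mt}. Introduce the closure
$$
\overline{\M}:=\{X\in\D(A):H^i(X)\in\M\text{ for all }i\in\Z\}
$$
and first check that $\overline{\M}$ is a localizing subcategory of $\D(A)$ that is closed under homology. Closure under shifts, direct sums, and homology is immediate since each $H^i$ commutes with these operations; the only non-routine point is closure under triangles. For a distinguished triangle $X\to Y\to Z\to X[1]$ with $X,Y\in\overline{\M}$, the long exact homology sequence furnishes, around each $H^i(Z)$, a five-term exact sequence whose four outer entries lie in $\M$, so thickness of $\M$ forces $H^i(Z)\in\M$.

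Once $\overline{\M}$ is identified as localizing and closed under homology, Theorem \ref{mt} (alternatively Corollary \ref{lchst}(1)) yields a coherent subset $\Phi:=\supp\overline{\M}$ with $\overline{\M}=\supp^{-1}\Phi$. Restricting to modules gives the support-theoretic description
$$
\M=\{N\in\Mod A:\supp N\subseteq\Phi\},
$$
which is the structural input I exploit below.

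The \emph{if} direction of E-stability comes for free: from exactness of $0\to M\to E^0(M)\to E^1(M)$ one reads off the five-term exact sequence $0\to 0\to M\to E^0(M)\to E^1(M)$, so thickness of $\M$ forces $M\in\M$ whenever both $E^0(M),E^1(M)\in\M$. The \emph{only if} direction I would establish by a short induction on the cosyzygies $K^0:=M$, $K^{i+1}:=\Coker(K^i\hookrightarrow E^i(M))$. Assuming $K^i\in\M$, and hence $\supp K^i\subseteq\Phi$: minimality identifies $E^i(M)$ with the injective hull $E(K^i)$, so $\Ass E^i(M)=\Ass K^i$; Lemma \ref{ass}(2) converts this into $\supp E^i(M)=\Ass K^i$; and the inclusion $\Ass K^i\subseteq\supp K^i$ then yields $\supp E^i(M)\subseteq\Phi$, i.e.\ $E^i(M)\in\M$. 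The short exact sequence $0\to K^i\to E^i(M)\to K^{i+1}\to 0$ now forces $K^{i+1}\in\M$ by thickness, closing the induction.

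The main obstacle is the seemingly humble inclusion $\Ass N\subseteq\supp N$ invoked at each cosyzygy. This is a standard feature of the small support (due to Foxby): if $\p\in\Ass N$, one localizes at $\p$ to obtain an embedding $k(\p)\hookrightarrow N_\p$ and then exhibits the socle element as a nonzero top Koszul homology class against a generating sequence of $\p A_\p$, whence some $\Tor^{A_\p}_\bullet(k(\p),N_\p)$ is nonzero. It seems cleanest to isolate this as a short preliminary lemma; the remainder of the proposition is then a two-line induction on cosyzygies.
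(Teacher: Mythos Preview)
Your argument is correct, but it takes a substantially longer route than the paper's. The paper's proof is five lines: it invokes \cite[Lemma 3.5]{Krause} to conclude that a thick subcategory $\M$ closed under direct sums is closed under injective hulls, whence closure under cokernels gives $E^i(M)\in\M$ for all $i\ge 0$ whenever $M\in\M$; the converse direction is exactly your kernel argument from $0\to M\to E^0(M)\to E^1(M)$. By contrast, you first build $\overline\M$, invoke the derived-category classification to obtain $\M=(\supp^{-1}\Phi)_0$, and then argue E-stability of the latter via the cosyzygy induction and the Foxby inclusion $\Ass N\subseteq\supp N$. This is sound but circuitous: what you are really doing in the second half is reproving the well-definedness of the map $g$ in Theorem~\ref{mp} (which the paper establishes later, and by the same support/Bass-number calculus). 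Two minor observations within your approach: first, you do not actually use coherence of $\Phi$ anywhere, so Theorem~\ref{neeman}(1) already suffices in place of Theorem~\ref{mt}; second, the inclusion $\Ass N\subseteq\supp N$ you isolate is immediate from \cite[Lemma 3.3]{Krause} (since $\Ass N=\Ass E^0(N)\subseteq\bigcup_i\Ass E^i(N)=\supp N$), so no separate Koszul argument is needed. The payoff of your route is that it makes the link to the derived picture explicit; the cost is that it front-loads machinery that the proposition does not require.
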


\begin{proof}
Let $\M$ be a thick subcategory of $\Mod A$ closed under direct sums.
Then $\M$ is closed under cokernels and injective hulls by \cite[Lemma 3.5]{Krause}.
Hence $E^i(M)$ is in $\M$ for every $M\in\M$ and $i\ge 0$.
Conversely, let $M$ be an $A$-module with $E^i(M)\in\M$ for any $i\ge 0$.
There is an exact sequence
$$
0 \to M \to E^0(M) \to E^1(M)
$$
of $A$-modules, and $M$ is in $\M$ by the closedness of $\M$ under kernels.
Consequently, $\M$ is E-stable.
\end{proof}

For a subcategory $\M$ of $\Mod A$, we denote by $\supp\M$ the set of prime ideals $\p$ of $A$ such that $\p\in\supp M$ for some $M\in\M$.
For a subcategory $\X$ of $\D(A)$, we denote by $\X_0$ the subcategory of $\Mod A$ consisting of all $A$-modules $M$ with $C_M\in\X$.
Now we can construct the following one-to-one correspondence.

\begin{thm}\label{mp}
One has maps
$$
\AC
\begin{smallmatrix}
f\\
\longrightarrow\\
\longleftarrow\\
g
\end{smallmatrix}
\AB
$$
defined by $f(\M)=\supp\M$ and $g(\Phi)=(\supp^{-1}\Phi)_0$.
The map $f$ is an inclusion-preserving bijection and $g$ is its inverse map.
\end{thm}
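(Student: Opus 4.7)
The plan is to deduce Theorem \ref{mp} from Neeman's correspondence (Theorem \ref{neeman}(1)) by showing that $(-)_0\colon\AA\to\AC$ and $\widetilde{(-)}\colon\AC\to\AA$ are mutually inverse bijections, then composing with the support maps of that theorem. The essential technical input is the following

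\textbf{Key Lemma.} \emph{For every $A$-module $M$, $\supp M=\bigcup_{i\ge 0}\supp E^i(M)=\bigcup_{i\ge 0}\Ass E^i(M)$ (the last equality by Lemma \ref{ass}(2)).}

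I would prove the Key Lemma in two halves. For $\bigcup_i\Ass E^i(M)\subseteq\supp M$, first establish the sublemma $\Ass N\subseteq\supp N$ for any $A$-module $N$: an embedding $k(\p)\hookrightarrow N_\p$ obtained from $\p\in\Ass N$ yields a short exact sequence $0\to k(\p)\to N_\p\to Q\to 0$ over $A_\p$, and the Tor long exact sequence then forces some $\Tor^{A_\p}_i(k(\p),N_\p)$ to be nonzero (tracing the map $k(\p)=\Tor_0(k(\p),k(\p))\to\Tor_0(k(\p),N_\p)$ shows that either $\Tor_0$ or a higher $\Tor$ must survive). Chaining this sublemma and Lemma \ref{supp}(1) through the short exact sequences $0\to\syz^{-i}M\to E^i(M)\to\syz^{-(i+1)}M\to 0$ then yields $\Ass E^i(M)=\Ass\syz^{-i}M\subseteq\supp\syz^{-i}M\subseteq\supp M$ by induction on $i$. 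For the reverse inclusion $\supp M\subseteq\bigcup_i\supp E^i(M)$, identify $C_M$ with the bounded-below complex $E(M)$ in $\D(A)$ and present $E(M)\simeq\operatorname*{hocolim}_n\sigma^{\le n}E(M)$ via its brutal truncations. For each $n$, $\sigma^{\le n}E(M)$ is bounded and lies in the localizing subcategory $\supp^{-1}\Phi$, where $\Phi:=\bigcup_i\supp E^i(M)$, by Lemma \ref{supp}(3); closure of this subcategory under direct sums and cones then forces the homotopy colimit, and hence $M=C_M$, to lie in $\supp^{-1}\Phi$ as well.

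Given the Key Lemma, the rest of the theorem falls out routinely. For well-definedness of $g(\Phi)=(\supp^{-1}\Phi)_0=\{M:\supp M\subseteq\Phi\}$ in $\AC$, closure under direct sums and summands is immediate from Lemma \ref{supp}(2) and Proposition \ref{-1}, and E-stability is exactly the Key Lemma. The identity $fg=\id$ holds because $\supp g(\Phi)\subseteq\Phi$ trivially and each $\p\in\Phi$ is witnessed by $E(A/\p)\in g(\Phi)$ via Lemma \ref{ass}(2). For $gf=\id$, the inclusion $\M\subseteq gf(\M)$ is tautological; conversely, given $M$ with $\supp M\subseteq\supp\M$, E-stability of $\M$ and Lemma \ref{ass}(1) reduce the task to showing $E(A/\p)\in\M$ for every $\p\in\Ass E^i(M)\subseteq\supp M\subseteq\supp\M$. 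For such a $\p$, pick $N\in\M$ with $\p\in\supp N$; the Key Lemma applied to $N$ puts $\p\in\Ass E^j(N)$ for some $j$, so Lemma \ref{ass}(1) exhibits $E(A/\p)$ as a direct summand of $E^j(N)\in\M$, and E-stability plus summand closure of $\M$ finish the job.

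\textbf{Main obstacle.} The bottleneck is the inclusion $\supp M\subseteq\bigcup_i\supp E^i(M)$ in the Key Lemma. Lemma \ref{supp}(3) and Corollary \ref{gene}(2) control supports of bounded-above complexes, whereas $E(M)$ is only bounded below, so one must bridge the gap by a colimit argument. The homotopy-colimit presentation of $E(M)$ in terms of its brutal truncations is the cleanest route; an alternative via the second-quadrant spectral sequence of $F\otimes E(M)$, with $F$ a flat resolution of $k(\p)$, merely trades this for equivalent convergence issues arising from the same one-sided unboundedness.
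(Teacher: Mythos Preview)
Your overall architecture coincides with the paper's: both arguments pivot on the identity $\supp M=\bigcup_{i\ge0}\Ass E^i(M)$ (your Key Lemma, which the paper simply imports as \cite[Lemma~3.3]{Krause}), and from there the verification that $g$ lands in E-stable subcategories and that $fg$, $gf$ are identities is essentially line-for-line the same---E-stability together with Lemma~\ref{ass} does all the work in both proofs. The only substantive difference is that you try to prove the Key Lemma rather than cite it.

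That attempt has one real gap. Your Tor argument for the sublemma $\Ass N\subseteq\supp N$ does not close: from $0\to k(\p)\to N_\p\to Q\to 0$ and the hypothesis $\Tor_i^{A_\p}(k(\p),N_\p)=0$ for all $i$, the long exact sequence only yields $\Tor_{i+1}^{A_\p}(k(\p),Q)\cong\Tor_i^{A_\p}(k(\p),k(\p))$, i.e.\ $k(\p)\otimes^{\bf L}Q\simeq (k(\p)\otimes^{\bf L}k(\p))[1]$, which is perfectly consistent and not a contradiction. ``Tracing the map'' $\Tor_0(k(\p),k(\p))\to\Tor_0(k(\p),N_\p)$ does not force anything to survive. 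The standard fix is to pass to the $\RHom$ side of support: $\p\in\Ass N$ gives $\Hom_{A_\p}(k(\p),N_\p)\neq0$, hence $\RHom_A(k(\p),N)\neq0$, and then one invokes the (Foxby/Neeman) equivalence $\RHom_A(k(\p),X)\neq0\iff k(\p)\otimes_A^{\bf L}X\neq0$. With that patch your inductive chain $\Ass E^i(M)=\Ass\syz^{-i}M\subseteq\supp\syz^{-i}M\subseteq\supp M$ is valid, your hocolim argument for the reverse inclusion is correct as written, and the remainder of your proof is the paper's proof.
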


\begin{proof}
Let $\Phi$ be a subset of $\Spec A$, and put $\M=(\supp^{-1}\Phi)_0$.
We observe by Lemma \ref{supp}(2) that $\M$ is closed under direct sums and summands.
Fix an $A$-module $M$.
According to \cite[Lemma 3.3]{Krause} and Lemma \ref{ass}(2), we have
\begin{align*}
M\in\M & \ \Longleftrightarrow\ \supp M\subseteq\Phi \\
& \ \Longleftrightarrow\ \Ass E^i(M)\subseteq\Phi\text{ for all }i\ge 0 \\
& \ \Longleftrightarrow\ \supp E^i(M)\subseteq\Phi\text{ for all }i\ge 0 \\
& \ \Longleftrightarrow\ E^i(M)\in\M\text{ for all }i\ge 0.
\end{align*}
Hence $\M$ is E-stable, which says that the map $g$ is well-defined.

Let $\M$ be an E-stable subcategory of $\Mod A$ closed under direct sums and summands.
It is obvious that $\M$ is contained in $(\supp^{-1}(\supp\M))_0$.
Let $N$ be an $A$-module with $\supp N\subseteq\supp\M$.
Then we see from \cite[Lemma 3.3]{Krause} that for each $i\ge 0$ and $\p\in\Ass E^i(N)$ there exists a module $M\in\M$ and an integer $j\ge 0$ such that $\p\in\Ass E^j(M)$.
Hence $E(A/\p)$ is isomorphic to a direct summand of $E^j(M)$.
The module $E^j(M)$ is in $\M$ since $\M$ is E-stable, and $E(A/\p)$ is also in $\M$ since $\M$ is closed under direct summands.
Therefore by Lemma \ref{ass}(1) the module $E^i(N)$ is in $\M$ for every $i\ge 0$ since $\M$ is closed under direct sums, and $N$ is also in $\M$ since $\M$ is E-stable.
Thus we conclude that the composite map $gf$ is the identity map.

Let $\Phi$ be a subset of $\Spec A$.
It is obvious that $\supp((\supp^{-1}\Phi)_0)$ is contained in $\Phi$.
For $\p\in\Phi$ we have $\supp E(A/\p)=\Ass E(A/\p)=\{\p\}\subseteq\Phi$ by Lemma \ref{ass}(2).
This implies that $\Phi$ is contained in $\supp((\supp^{-1}\Phi)_0)$, and we conclude that the composite map $fg$ is the identity map.
\end{proof}

We say that a subcategory $\M$ of $\Mod A$ is {\em closed under short exact sequences} provided that for any short exact sequence $0\to L\to M\to N\to 0$ of $A$-modules, if two of $L$, $M$ and $N$ are in $\M$, then so is the third.
We say that $\M$ is {\em closed under left resolutions} provided that for any exact sequence $\cdots\to M_2\to M_1\to M_0\to N\to 0$ of $A$-modules, if every $M_i$ is in $\M$ then so is $N$.
Theorem \ref{mp} yields the following result.

\begin{cor}
Let $\M$ be an E-stable subcategory of $\Mod A$ closed under direct sums and summands.
Then $\M$ is closed under short exact sequences and left resolutions.
\end{cor}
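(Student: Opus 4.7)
My plan is to reduce both closure properties to support conditions by invoking Theorem \ref{mp}. Setting $\Phi := \supp\M$, that theorem identifies $\M$ with $(\supp^{-1}\Phi)_0$, so that an $A$-module $M$ lies in $\M$ precisely when $\supp M \subseteq \Phi$. With this characterisation in hand, verifying the two closure properties amounts to checking that certain supports are contained in $\Phi$, and Lemma \ref{supp} will supply the needed inclusions.

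For closure under short exact sequences, I would take an exact sequence $0 \to L \to M \to N \to 0$ of $A$-modules and pass to the induced exact triangle $C_L \to C_M \to C_N \to C_L[1]$ in $\D(A)$. Applying the three inclusions in Lemma \ref{supp}(1) together with the shift-invariance of support, I conclude that whenever two of $\supp L$, $\supp M$, $\supp N$ are contained in $\Phi$, so is the third; translating back through Theorem \ref{mp}, whenever two of $L$, $M$, $N$ lie in $\M$, so does the third.

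For closure under left resolutions, given an exact sequence $\cdots \to M_2 \to M_1 \to M_0 \to N \to 0$ with each $M_i \in \M$, I form the complex $X := (\cdots \to M_2 \to M_1 \to M_0 \to 0)$ placed in non-positive degrees. The exactness of the resolution ensures that $X$ has $N$ as its only non-vanishing cohomology (in degree zero), so $X \cong C_N$ in $\D(A)$. Lemma \ref{supp}(3) then yields
\[
\supp C_N \;=\; \supp X \;\subseteq\; \bigcup_{i \ge 0} \supp M_i \;\subseteq\; \Phi,
\]
so $N \in \M$ by the characterisation above.

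I do not anticipate a genuine obstacle: once Theorem \ref{mp} is available, the statement is essentially a formal consequence of the elementary support inequalities in Lemma \ref{supp}. The only point that requires a moment of care is that the resolution $\cdots \to M_1 \to M_0 \to N \to 0$ may be infinite, so one must know that the union of the supports $\supp M_i$ remains inside $\Phi$; this is automatic since each individual $\supp M_i$ is contained in $\Phi$, with no finiteness hypothesis needed.
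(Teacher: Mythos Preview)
Your proposal is correct and follows essentially the same route as the paper: invoke Theorem \ref{mp} to identify $\M$ with $(\supp^{-1}\Phi)_0$, then use Lemma \ref{supp}(1) on the exact triangle for closure under short exact sequences and Lemma \ref{supp}(3) on the truncated resolution (quasi-isomorphic to $N$) for closure under left resolutions. The only cosmetic difference is that you take $\Phi=\supp\M$ explicitly, whereas the paper simply asserts the existence of such a $\Phi$ via Theorem \ref{mp}.
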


\begin{proof}
By virtue of Theorem \ref{mp} there exists a subset $\Phi$ of $\Spec A$ such that $\M=(\supp^{-1}\Phi)_0$.

Let $0\to L\to M\to N\to 0$ be an exact sequence of $A$-modules.
Assume that two of $L$, $M$ and $N$, say $L$ and $M$, are in $\M$.
Then $\supp L$ and $\supp M$ are contained in $\Phi$, and so is $\supp N$ by Lemma \ref{supp}(1).
Hence $N$ is also in $\M$, and therefore $\M$ is closed under short exact sequences.

Let $\cdots\to M_2\to M_1\to M_0\to N\to 0$ be an exact sequence of $A$-modules with $M_i\in\M$ for any $i\ge 0$.
Then we have an $A$-complex $X=(\cdots\to X^{-2}\to X^{-1}\to X^0\to 0)$ with $X^{-i}=M_i$ for $i\ge 0$ which is quasi-isomorphic to $N$.
Lemma \ref{supp}(3) implies that $\supp N=\supp X\subseteq\bigcup_{i\le 0}\supp X^i\subseteq\Phi$.
Therefore $N$ is in $\M$.
\end{proof}

An $A$-complex $X$ is called {\em K-injective} if every morphism from an acyclic $A$-complex to $X$ is null-homotopic.
An $A$-complex $I$ is called a {\em minimal K-injective resolution} of an $A$-complex $X$ if there exists a quasi-isomorphism $X\to I$, each $I^i$ is an injective module, $I$ is a K-injective complex, and the kernel of the differential map $I^i\to I^{i+1}$ is an essential submodule of $I^i$ for all $i\in\Z$.
Every $A$-complex admits a minimal K-injective resolution; see \cite[before Proposition 5.1]{Krause}.

For a subcategory $\M$ of $\Mod A$, we denote by $\widetilde\M$ the localizing subcategory of $\D(A)$ generated by $\M$, and by $\overline\M$ the localizing subcategory of $\D(A)$ consisting of all complexes each of whose homology modules is in $\M$.
For an $A$-complex $X$ and an integer $i$, let $Z^i(X)$ (respectively, $B^i(X)$) denote the $i$th cycle (respectively, boundary) of $X$.

\begin{prop}\label{aboutmp}
\begin{enumerate}[\rm (1)]
\item
Let $\M$ be an E-stable subcategory of $\Mod A$ closed under direct sums and summands.
Then
$$
\supp^{-1}(\supp\M)=\widetilde\M.
$$
\item
Let $\M$ be a thick subcategory of $\Mod A$ closed under direct sums.
Then
$$
\supp^{-1}(\supp\M)=\overline\M.
$$
\end{enumerate}
\end{prop}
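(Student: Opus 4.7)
The plan is to prove (1) by two inclusions and then bootstrap (2) from (1) using H-stability. The guiding idea is that E-stability of $\M$, though phrased in terms of injective hulls, indirectly controls the residue fields $k(\p)$ through Neeman's bijection between localizing subcategories of $\D(A)$ and subsets of $\Spec A$.

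For (1), the inclusion $\widetilde\M\subseteq\supp^{-1}(\supp\M)$ is immediate: every $M\in\M$ satisfies $\supp M\subseteq\supp\M$, so $\M\subseteq\supp^{-1}(\supp\M)$, and Proposition \ref{-1} ensures the right-hand side is localizing. For the reverse, by Corollary \ref{gene}(1) the subcategory $\supp^{-1}(\supp\M)$ is generated as a localizing subcategory by the residue fields $\{k(\p)\mid\p\in\supp\M\}$, so it suffices to place each such $k(\p)$ into $\widetilde\M$. I would recycle the argument from the proof of Theorem \ref{mp}: for $\p\in\supp\M$, \cite[Lemma 3.3]{Krause} produces $M\in\M$ and $i\ge 0$ with $\p\in\Ass E^i(M)$; E-stability places $E^i(M)\in\M$; Lemma \ref{ass}(1) displays $E(A/\p)$ as a direct summand of $E^i(M)$; and closure under summands gives $E(A/\p)\in\M$. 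Then Lemma \ref{ass}(2) yields $\supp E(A/\p)=\{\p\}=\supp k(\p)$, so Neeman's bijection (Theorem \ref{neeman}(1)) identifies the localizing subcategories generated by $E(A/\p)$ and by $k(\p)$, forcing $k(\p)\in\widetilde\M$.

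For (2), Proposition \ref{thest} upgrades the hypotheses on $\M$ to E-stability, while the remark following the definition of thick supplies closure under summands; part (1) therefore applies to give $\widetilde\M=\supp^{-1}(\supp\M)$, and the goal reduces to proving $\widetilde\M=\overline\M$. My plan is to establish this equality purely through H-stability, bypassing any spectral-sequence manipulation of support. By Krause's theorem (stated in the Introduction) $\supp\M$ is coherent, so Theorem \ref{mt} places $\supp^{-1}(\supp\M)$ among the localizing subcategories closed under homology, and Corollary \ref{lchst}(1) promotes this to H-stability. Combined with the identification $(\supp^{-1}(\supp\M))_0=\M$ from Theorem \ref{mp}, H-stability yields: $X\in\supp^{-1}(\supp\M)$ iff $H^i(X)\in\supp^{-1}(\supp\M)$ for every $i\in\Z$ iff $H^i(X)\in\M$ for every $i\in\Z$ iff $X\in\overline\M$, which is the desired equality.

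The main obstacle is the reverse inclusion in (1): one must pass from control over the injective hulls $E(A/\p)$ (which E-stability directly supplies) to a statement about the residue fields $k(\p)$ that generate $\supp^{-1}(\supp\M)$. The clean resolution is the observation that $E(A/\p)$ and $k(\p)$ share the support $\{\p\}$, so Neeman's bijection forces them to generate the same localizing subcategory. Once (1) is in hand, (2) follows formally from the H-stability package (Theorem \ref{mt}, Corollary \ref{lchst}(1), Theorem \ref{mp}).
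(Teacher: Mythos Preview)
Your proof is correct, but both parts take different routes from the paper.

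For (1), the overall architecture matches the paper: show $\widetilde\M\subseteq\supp^{-1}(\supp\M)$ trivially, then use Corollary~\ref{gene}(1) to reduce the reverse inclusion to $k(\p)\in\widetilde\M$ for each $\p\in\supp\M$. The difference is in how that last step is executed. The paper never touches $E(A/\p)$: it picks $M\in\M$ with $k(\p)\otimes_A^{\bf L}M\neq 0$, invokes \cite[(2.1.7)]{Neeman} to place $k(\p)\otimes_A^{\bf L}M$ in $\widetilde\M$, observes this complex is a nonzero direct sum of shifts of $k(\p)$, and extracts $k(\p)$ by closure under shifts and summands. This argument uses neither E-stability nor closure of $\M$ under summands, so it in fact proves (1) for an arbitrary subcategory $\M$ of $\Mod A$. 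Your route, by contrast, exploits the hypotheses: E-stability and closure under summands put $E(A/\p)\in\M$, and then Neeman's bijection swaps $E(A/\p)$ for $k(\p)$. Your argument is perhaps more in the spirit of the E-stable framework; the paper's is shorter and more general.

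For (2), the paper argues directly: given $\supp X\subseteq\supp\M$, it takes a minimal K-injective resolution $I$ of $X$, shows each $I^i\in\M$ via \cite[Lemma 3.3, Proposition 5.1, Lemma 3.5]{Krause}, and then extracts $H^i(X)\in\M$ from the short exact sequences involving $Z^i(I)$ and $B^i(I)$ using thickness; the converse direction goes through coherence of $\supp\M$ and \cite[Theorem 5.2]{Krause}. Your argument bypasses the K-injective computation entirely by assembling results already established in the paper (Theorem~\ref{mt}, Corollary~\ref{lchst}(1), Theorem~\ref{mp}) into the chain of equivalences you wrote down. This is cleaner and a nice illustration that (2) is formal once the H-stability machinery is in place. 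Note, incidentally, that your detour through (1) and $\widetilde\M$ is not actually needed in your proof of (2): your final chain of equivalences directly establishes $\supp^{-1}(\supp\M)=\overline\M$ without ever using $\widetilde\M$.
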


\begin{proof}
(1) Set $\X=\supp^{-1}(\supp\M)$.
We see from Proposition \ref{-1} that $\X$ is a localizing subcategory of $\D(A)$ containing $\M$.
Hence $\X$ contains $\widetilde\M$.
Corollary \ref{gene}(1) says that $\X$ is the localizing subcategory of $\D(A)$ generated by $\{ k(\p)\}_{\p\in\supp\M}$.
Hence we have only to show that $k(\p)$ belongs to $\widetilde\M$ for every $\p\in\supp\M$.

Fix a prime ideal $\p$ in $\supp\M$.
Then $k(\p)\otimes_A^{\bf L}M$ is nonzero for some $M\in\M$.
Since $M$ is in $\widetilde\M$, the complex $k(\p)\otimes_A^{\bf L}M$ is in $\widetilde\M$ by \cite[(2.1.7)]{Neeman}.
Note that $k(\p)\otimes_A^{\bf L}M$ is isomorphic to a nonzero direct sum of shifts of $k(\p)$.
Since $\widetilde\M$ is closed under shifts and direct summands by Corollary \ref{gene}(2), $k(\p)$ is in $\widetilde\M$, as required.

(2) Fix an $A$-complex $X$.
We want to prove that $\supp X\subseteq\supp\M$ if and only if $H^i(X)\in\M$ for all integers $i$.

Suppose that the inclusion relation $\supp X\subseteq\supp\M$ holds.
Let $I$ be a minimal K-injective resolution of $X$.
Then we see from \cite[Lemma 3.3 and Proposition 5.1]{Krause} that for each $i\in\Z$ and $\p\in\Ass I^i$ there exists a module $M\in\M$ and an integer $j\ge 0$ such that $\p\in\Ass E^j(M)$.
Hence $E(A/\p)$ is isomorphic to a direct summand of $E^j(M)$.
We observe from \cite[Lemma 3.5]{Krause} and the closedness of $\M$ under cokernels that $E^j(M)$ is in $\M$, and from the closedness of $\M$ under direct summands that $E(A/\p)$ is also in $\M$.
Therefore each $I^i$ is in $\M$ by Lemma \ref{ass}(1) as $\M$ is closed under direct sums.
For every $i\in\Z$ there are exact sequences of $A$-modules:
\begin{align*}
& 0 \to Z^i(I) \to I^i \to I^{i+1},\\
& 0 \to Z^i(I) \to I^i \to B^{i+1}(I) \to 0,\\
& 0 \to B^i(I) \to Z^i(I) \to H^i(X) \to 0.
\end{align*}
Since $\M$ is closed under kernels and cokernels, from these exact sequences we observe that $H^i(X)$ is in $\M$ for every $i\in\Z$.

Conversely, suppose that all $H^i(X)$ belong to $\M$.
Then $\supp H^i(X)$ is contained in $\supp\M$ for all $i\in\Z$.
Here note from \cite[Theorem 3.1]{Krause} that $\supp\M$ is a coherent subset of $\Spec A$.
Therefore it follows by \cite[Theorem 5.2]{Krause} that $\supp X$ is contained in $\supp\M$, as desired.
\end{proof}

Now we are in a position to prove our main theorem which we stated in Introduction.

\begin{mpf}
The first commutative diagram of bijections in Main Theorem is obtained from Theorems \ref{neeman}(1), \ref{mp} and Proposition \ref{aboutmp}(1).
Theorem \ref{mt}, \cite[Theorem 3.1]{Krause} and Proposition \ref{aboutmp}(2) make the second diagram in Main Theorem.
Theorem \ref{neeman}(2), \cite[Corollary 3.6]{Krause} and Proposition \ref{aboutmp}(2) give the third one.
All these three commutative diagrams together with Proposition \ref{thest}, Corollary \ref{lchst}(2) and \cite[Proposition 4.1(2)]{Krause} yield the last diagram in Main Theorem.
\qed
\end{mpf}

\section*{Acknowledgments}

The author is indebted to the referee for his/her valuable comments.
The author also thanks Hiroki Miyahara very much for his helpful comments.


\end{document}